\DeclarePairedDelimiter\ceil{\lceil}{\rceil}
\DeclarePairedDelimiter\floor{\lfloor}{\rfloor}
\newtheorem{conjecture}[proposition]{Conjecture}
 \newtheorem*{definition*}{Definition}
\def\R{\mathbb{R}}
\begin{document}
%\newgeometry{a4paper, top=3.5cm, outer=3.5cm, inner=3.5cm, bottom=4cm, headsep=3cm}

\title{Further analysis of Peeling Sequences}

%%%%%%%%%  Authors' names should be set \large  %%%%%%%%%%%%%%%%%%%%%%%
%%%%%%%%%  Pls use exponents to attach affiliations to authors as shown  %%%%%
%%%%%%%%%  Plse use  b o t h  an asterisk in the exponent   %%%%%%%%%%%%%%%%
%%%%%%%%%  a n d  the "Corresponding Author" note to mark  %%%%%%%%%%%%%%
%%%%%%%%%  the corresponding author, as shown  %%%%%%%%%%%%%%%%%%%%%%

\author{\large{Dániel Gábor Simon}$^{1,2}$} 

\affiliation[1]{HUN-REN Alfréd Rényi Institute of Mathematics, Budapest, Hungary}
\affiliation[2]{Eötvös Loránd University, Budapest, Hungary}
\newcommand{\CorrespondingAuthorEmail}{dgsimon@renyi.hu} 

\newcommand{\KeyWords}{Peeling, algorithm, convexity, geometry, combinatorics}

%%%%%%%%% Pls provide precisely one primary MSC2020 category  %%%%%%%%%%
\newcommand{\PrimarySubjectClass}{52C35}
%%%%%%%%% Pls provide at least one secondary MSC2020 category  %%%%%%%%%%
\newcommand{\SecondarySubjectClass}{52C45}

\begin{abstract}
\linespread{1}\selectfont
 \noindent Let $P\subset \R^2$ be a set of $n$ points in general position. A peeling sequence of $P$ is a list of its points, such that if we remove the points from $P$ in that order, we always remove the next point from the convex hull of the remainder of $P$. Using the methodology of Dumitrescu and Tóth \cite{Dumitrescu}, with a more careful analysis, we improve the upper bound on the minimum number of peeling sequences for an $n$ point set in the plane from $12.29^n/100$ to $9.78^n/500$.
\end{abstract}

\maketitle
%\restoregeometry
\section{Introduction}
A set of points  $P\subset\R^2$ lies in general position if no three points of $P$ are collinear. Let $P$ be such a set. We define a peeling algorithm in the following way: Take the convex hull of the current point set, remove one point from the hull, and write down its label. Repeat on the set without that point. Stop when there are no more points left in the set.

This way, we generated a sequence of the labels of the vertices. It is called a \emph{peeling sequence}. The set of all peeling sequences consists of all orderings of the labels that can be achieved by the above algorithm.

\begin{definition*}
Given a point set $P$ in general position in $\R^2$, let $g(P)$ denote the number of peeling sequences of $P$. 

Let 
$g(n)=\min_{|P|=n} g(P)$, where the minimum is taken over all $n$-element point sets $P$ in general position in $\R^2$.
\end{definition*}

Peeling sequences in this form were first introduced by Dumitrescu \cite{dum1}. The currently known best bounds are $\Omega(3^n)\leq g(n)\leq O(12.29^n)$ by Dumitrescu \cite{dum1}, and Dumitrescu and Tóth \cite{Dumitrescu} respectively. In this paper, we improve the upper bound to $g(n)\leq O(9.78^n)$.

\begin{theorem}\label{maintheorem}
 Let $g(n)$ be the minimum number of peeling sequences a set of $n$ points can have. For any $n\geq 6$,\\$$g(n)\leq\frac{9.78^n}{500}.$$   
\end{theorem}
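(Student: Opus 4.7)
The plan is to retain the recursive construction of Dumitrescu and Tóth \cite{Dumitrescu} and to tighten the combinatorial count of peeling sequences at each recursive step. Their method builds a family $\{P_n\}$ in which $P_n$ is obtained from a smaller copy $P_{n-k}$ by adjoining a carefully placed block of $k$ new points, yielding a recursion $g(P_n)\leq M_k\cdot g(P_{n-k})$ and hence a growth rate of $M_k^{1/k}$ per point together with a base-case constant. The published analysis gives $M_k^{1/k}\leq 12.29$; my goal is to push this rate down to $9.78$.

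First, I would reproduce the construction and express every peeling sequence of $P_n$ as a pair consisting of a peeling sequence of $P_{n-k}$ together with an \emph{insertion schedule} telling when each of the $k$ new points is peeled. For each candidate schedule one must check whether every new point really lies on the current convex hull at the prescribed moment. Many of the schedules counted by the coarser bound of \cite{Dumitrescu} are in fact geometrically infeasible: a new point cannot be peeled before the points of $P_{n-k}$ that block it from the hull have been removed, nor after the other new points whose absence would push it back into the interior. By classifying schedules according to which new points are simultaneously on the hull and by discarding the infeasible interleavings, I would obtain a tighter multiplier $M_k'$ with $(M_k')^{1/k}\leq 9.78$. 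The improved constant $\frac{1}{500}$ would come from starting the induction at a suitable small $n_0\geq 6$: for small $n$ the identity $g(\text{convex } n\text{-gon})=n!$ is already strong enough to beat $\frac{9.78^n}{500}$, and choosing the best such $n_0$ absorbs the better base constant.

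\textbf{Main obstacle.} The principal difficulty will be the geometric case analysis used to rule out infeasible insertion schedules, since one must track the current convex hull through an \emph{arbitrary} peeling sequence of $P_{n-k}$ and argue uniformly across all of them. Striking the right balance between the granularity of the case analysis and the resulting saving in $M_k$ is what produces the rate $9.78$: a less detailed analysis simply reproduces the earlier $12.29$, while a finer one (possibly coupled with a larger block size $k$) might yield further improvement at the cost of considerably more bookkeeping.
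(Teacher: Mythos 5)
Your proposal rests on a misreading of the Dumitrescu--T\'oth construction. The set $S_n$ is not obtained from a single smaller copy $S_{n-k}$ by adjoining a block of $k$ new points; it is the (flattened) union of \emph{three} recursive copies $S_{n_1}\cup S_{n_2}\cup S_{n_3}$ with each $n_i\approx n/3$, placed along three rays at $120^\circ$. There is consequently no ``growth rate per point'' of the form $M_k^{1/k}$ and no ``insertion schedule'' relative to a large base: every point of $S_n$ sits inside one of three comparably sized recursive pieces, and the relevant combinatorial object is the \emph{interleaving} of three peeling sequences of roughly equal length. The crude bound from this structure, $g(S_n)\le\binom{n}{n_1,n_2,n_3}\prod_i g(S_{n_i})$, is already too weak on its own (the multinomial factor contributes close to $3^n$), so the entire content of the argument is in cutting down the number of \emph{feasible} interleavings.

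What the paper actually does to achieve $9.78$ is therefore not captured by your plan. It introduces a second family $B_n$ (an $S_n$ with two subrays deleted from one ray) and runs a \emph{mutual} induction proving simultaneously $g(S_n)\le a^n/500$ with $a=9.78$ and $g(B_n)\le b^n/500$ with $b=8.67$. The inductive step partitions peeling sequences by the step $s$ at which the first of the three rays vanishes and, for each range of $s$, uses a pigeonhole argument to show how much of the other two rays must already have been consumed by step $s$. This lets the paper replace $g(S_{\lceil n/3\rceil})$ by the strictly smaller quantities $g(B_{\lceil n/3\rceil})$, $g(S_{\lceil n/9\rceil})$, etc., in the recursion, and the binomial factor for the simplified peeling sequences is bounded via the binary-entropy estimate $\binom{m}{\alpha m}\le 2^{mH(\alpha)}$. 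None of this is reflected in ``discard the infeasible interleavings'' as you sketch it, and without the auxiliary family $B_n$ and the concrete case split on $s$ there is no route to the stated rate. (Your suggestion that the $1/500$ comes from using $g(\text{convex } n\text{-gon})=n!$ is also inverted: the convex polygon \emph{maximizes} the number of peeling sequences; the base cases instead use the crude bound $g(S_n)\le 3^{\lfloor n/3\rfloor}\lceil 2n/3\rceil!$ for $6\le n\le 53$.)

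So the gap is twofold: the recursion you propose to analyze is not the one in the construction, and the tightening mechanism you describe is too vague to substitute for the paper's two-parameter induction and explicit case analysis. To salvage the approach you would need to (i) reformulate around the three-way split and Lemma~\ref{decomp}, (ii) invent an auxiliary family playing the role of $B_n$ or some equivalent device that records partial depletion of a ray, and (iii) carry out the quantitative case analysis on the disappearance time $s$ with the entropy bound. As written, the proposal does not contain these ingredients.
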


The question was also investigated in higher dimensions. The interested reader can find the current best upper bounds in the author's other paper \cite{simon}.

\smallskip

   \textbf{Related work.} Many previously investigated properties of point sets are connected to the convexity of the set. In our example, the number of peeling sequences is maximal for a convex $n$-gon, and minimal for some highly nonconvex structure. Similar parameters connected to the convexity are the number of \emph{triangulations} (see \cite{triang3, triang2, triang1} for some bounds), the number of \emph{polygonizations} (see \cite{poly1}, \cite{poly2}), the number of \emph{non-crossing perfect matchings} (see \cite{matching1}), etc.
   
   Peelings of sets of points have also appeared in the literature before. Many results are about \emph{convex-layer peeling} \cite{even1, even2, grid1}, where at each step all vertices of the current convex hull are being removed, and the total number of layers is of interest. Such methods have been used in the proof of the famous empty hexagon theorem \cite{gerken}\cite{nicolas}; see also \cite{dumremark}. Computer graphics also uses different peeling algorithms, the affine curve shortening \cite{alvarez, grid2, sapiro} is an interesting example.  

    \section{Construction}

    The construction is identical to that defined by Dumitrescu and Tóth \cite{Dumitrescu}. We describe it here for self-containment purposes and to facilitate understanding of the other types of sets considered in this paper.

    We define the sets $S_n$ recursively for all $n\in\mathbb{N}$. Each set $S_n$ contains $n$ points, and is "flat": all points are very close to a line compared to the minimum pairwise distance in $S_n$. 
    
    \textbf{Base cases:}
    Let $S_1=\{\mathbf{o_1}\}$ be a single point, $S_2=\{\mathbf{o_2},\mathbf{u}\}$ be two distinct points on the $x$ axis.

    \textbf{Recursive step:} For $n\geq3$ we construct $S_n$ as follows. Let $n=n_1+n_2+n_3$ where each $n_i$ is either $\floor{\frac{n}{3}}$ or $\ceil{\frac{n}{3}}$, and $n_1\geq n_2\geq n_3$. Assume recursively that $S_{n_1},S_{n_2},S_{n_3}$ have already been defined. 
    
    \textbf{Placement of the sets:} Let $\mathbf{o_n}$ be a point serving as the origin of $S_n$. Let $r_1, r_2, r_3$ be three rays emanating from  $\mathbf{o_n}$ with angles of $120\degree$ between consecutive rays in clockwise order. Take $r_1$ to lie along the positive $x$-axis. 
    For each $i = 1,2,3$, place a rotated and translated copy of $S_{n_i}$ so that its first ray's direction aligns with ray $r_i$, as illustrated in Figure~\ref{fig:S'_n}. Furthermore, orient each copy so that its ray $r_1$ lies farthest from the origin; this makes the construction unique. Each set is far enough from the origin such that the convex hull consists of exactly one vertex from each ray (provided that the rays are non-empty). Also, the projections of the three sets to the $x$ axis are disjoint, so any point of $S_{n_1}$ has strictly larger $x$-coordinate than any point of $S_{n_2}$, and any point of $S_{n_3}$ has a strictly smaller $x$-coordinate than points of the other sets.

\begin{figure}
	\begin{center}
		\includegraphics[scale=0.25]{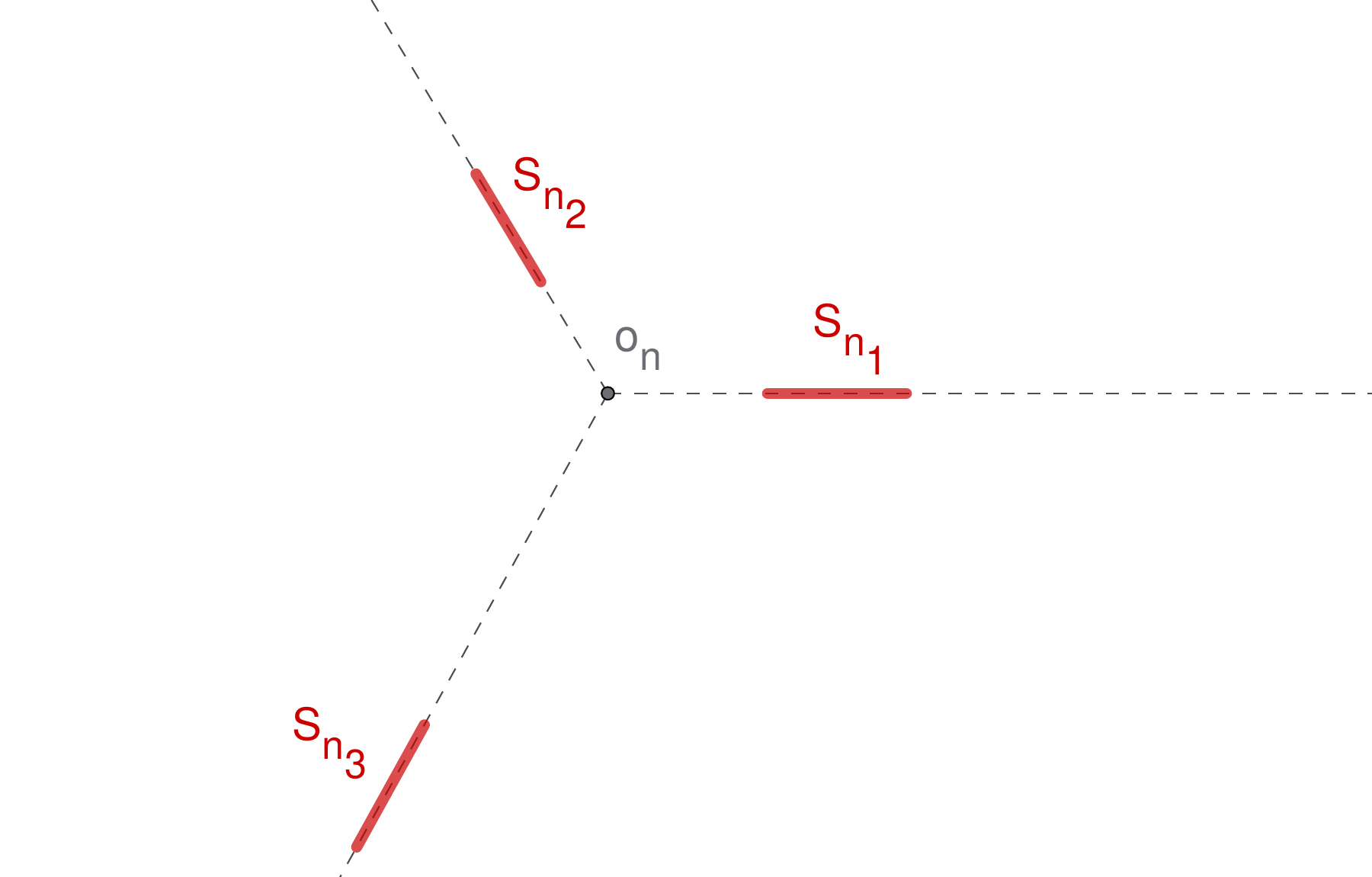}
		\caption{$S'_n$ before the flattening step}
		\label{fig:S'_n}
	\end{center}
\end{figure}
\begin{figure}
	\begin{center}
		\includegraphics[scale=0.25]{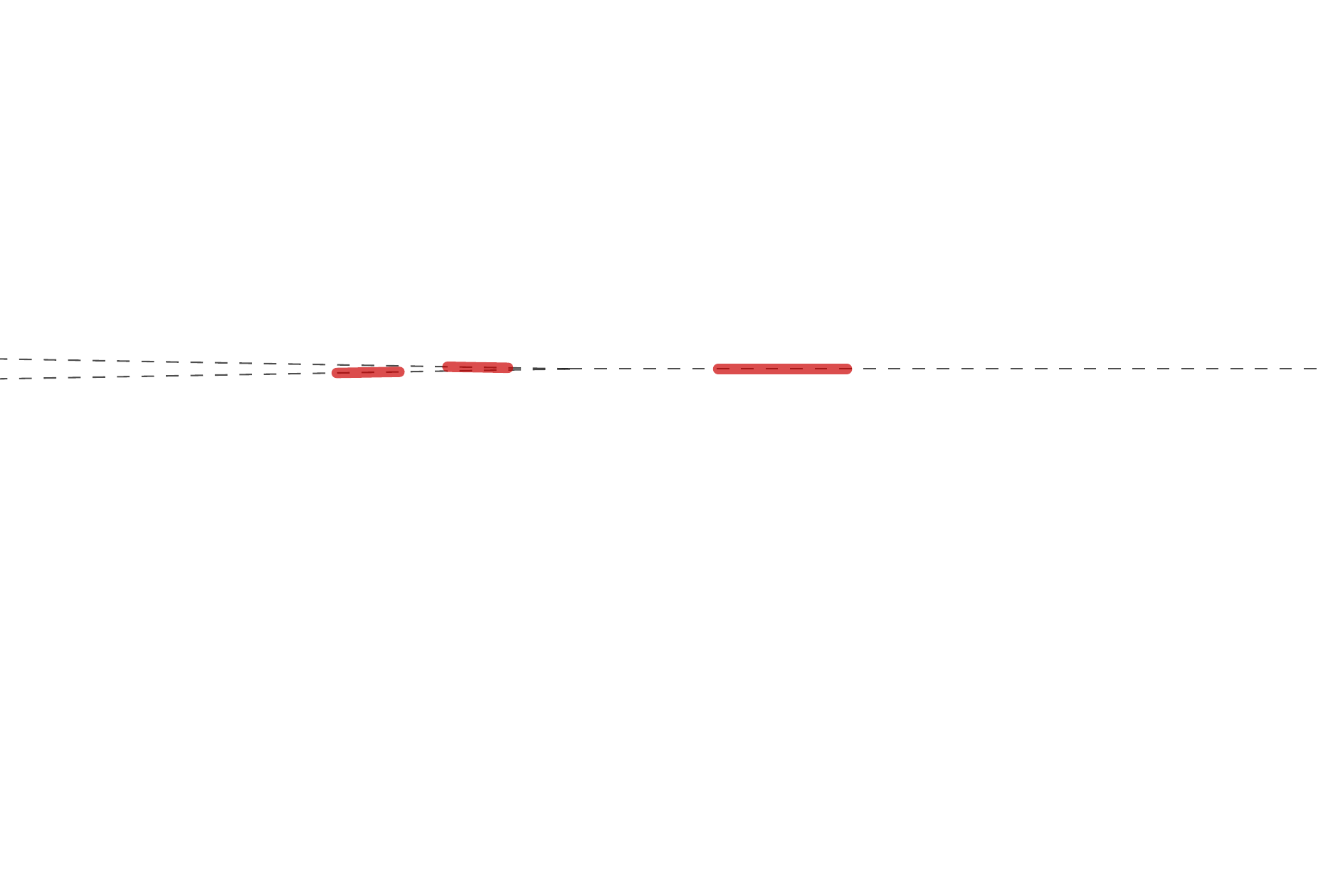}
		\caption{$S_n$ after the flattening step}
		\label{fig:S_n}
	\end{center}
\end{figure}

    \textbf{Flattening:} Let $S_n'$ denote the union of the three sets constructed above. We obtain $S_n$ by applying the transformation $(x,y)\rightarrow(x,\epsilon y)$ for a sufficiently small number $\epsilon>0$. For illustration, see Figure \ref{fig:S_n}.

   We also define a related family of point sets, which will be used in the proof of the upper bound.

    Let $B_n$ be the point set obtained from $S_n$ by selecting ray $r_1$ of $S_n$ and deleting its two subrays that are farthest from $\mathbf{o_n}$. Here, a \emph{subray} refers to a ray belonging to a copy of some $S_{n_i}$, where this copy is placed along the $i$-th ray $r_i$. So $B_n$ contains approximately $\frac{7n}{9}$ points. See Figure~\ref{fig:B_n} for an illustration.
    
    %Note that $B_n$ could be a few different sets, but all of them have approximately $7n/9$ points with at most $2$ differences. In the upper bound calculations, we always just assume the worst case scenario of a $B_n$, which means that we imagine the removed subrays each have $\floor{n/9}$ elements and allocate the rest of the points equally in the other $7$ subrays, so the remainder of the set is as large as possible. This set gives an upper bound on all possible $g(B_n)$, since adding extra vertices cannot decrease the number of peeling sequences of a set.

    \begin{figure}
	\begin{center}
		\includegraphics[scale=0.25]{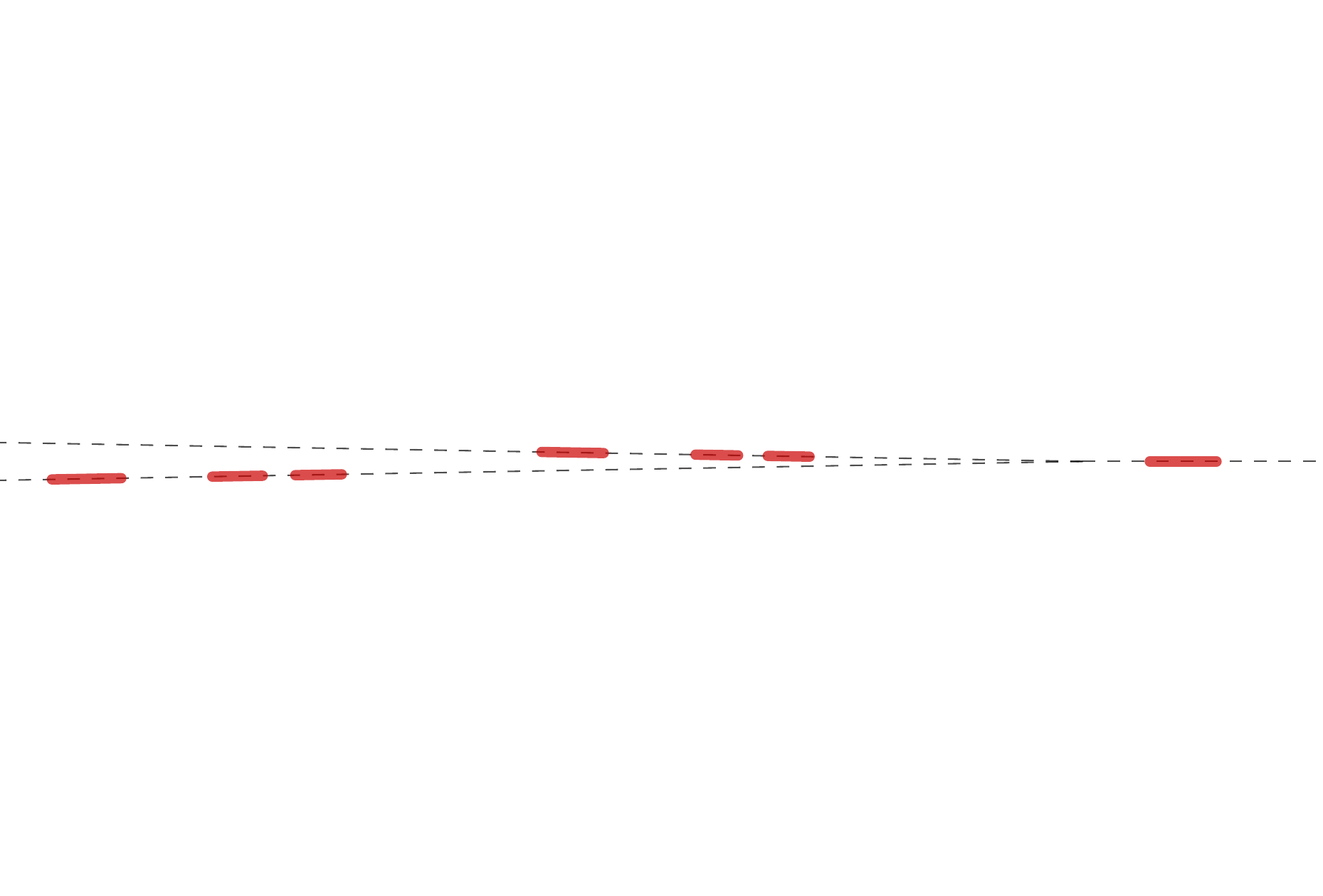}
		\caption{The construction of $B_n$. On the ray of the positive $x$-axis, we remove the $2$ outside subrays of the corresponding recursive set from $S_n$.}
		\label{fig:B_n}
	\end{center}
\end{figure}

    \section{Estimation Lemmas}
    The following statements are required for the proof of Theorem \ref{maintheorem}. They were also used in the analysis of Dumitrescu and Tóth \cite{Dumitrescu}.

\begin{definition*}
    For any $0\leq p\leq 1$, define \[ H(p)=-p\log_2 p-(1-p)\log_2{(1-p)} \] the \emph{binary entropy function}.
\end{definition*}
With this function, we can use the following estimate from \cite{Mitzenmacher}, Corollary 10.3.
    \begin{lemma}
        For any integer $n\geq1$, and $0\leq\alpha\leq\tfrac12$, \\ $$\binom{n}{\alpha n}\leq 2^{nH(\alpha)}$$
    \end{lemma}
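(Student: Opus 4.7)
The plan is to apply the binomial theorem with the choice of probability parameter $p=\alpha$. Expanding
\begin{equation*}
1 = \bigl(\alpha+(1-\alpha)\bigr)^n = \sum_{k=0}^n \binom{n}{k}\alpha^k(1-\alpha)^{n-k}
\end{equation*}
and using the fact that every summand on the right is non-negative, I would single out the $k=\alpha n$ term (which is implicitly assumed to be an integer, otherwise $\binom{n}{\alpha n}$ is not defined) to obtain
\begin{equation*}
\binom{n}{\alpha n}\,\alpha^{\alpha n}(1-\alpha)^{(1-\alpha)n}\;\leq\;1.
\end{equation*}

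From here the derivation is purely algebraic. First I would rearrange to isolate the binomial coefficient, yielding $\binom{n}{\alpha n}\leq \alpha^{-\alpha n}(1-\alpha)^{-(1-\alpha)n}$. Taking $\log_2$ of the right-hand side produces exactly $n\bigl(-\alpha\log_2\alpha-(1-\alpha)\log_2(1-\alpha)\bigr)=nH(\alpha)$, so exponentiating recovers the desired bound $\binom{n}{\alpha n}\leq 2^{nH(\alpha)}$.

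There is essentially no obstacle in this argument; the only point worth flagging is the boundary case $\alpha=0$, where one adopts the standard convention $0\log_2 0=0$ so that $H(0)=0$ and the bound collapses to the trivial identity $\binom{n}{0}=1\leq 2^0$. The hypothesis $\alpha\leq 1/2$ plays no role in the calculation itself, since both sides of the inequality are symmetric under $\alpha\leftrightarrow 1-\alpha$ (the binomial coefficient is symmetric about $n/2$ and so is $H$); it only fixes a canonical range for which $\alpha n\leq n/2$. Since the statement is quoted verbatim from Mitzenmacher and Upfal, the short one-line probabilistic-method derivation outlined above is all that is required.
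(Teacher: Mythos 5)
Your proof is correct and is precisely the standard binomial-theorem argument given in the cited reference (Mitzenmacher and Upfal, Corollary 10.3); the paper itself does not reprove this lemma but merely cites it. Your side remarks about the $\alpha=0$ boundary case and the fact that the hypothesis $\alpha\leq\frac12$ is not actually used in the derivation are both accurate.
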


    The next lemma is a simple estimate.
    
\begin{lemma}\label{lem:l}
    For any positive integers $n,k,l$ satisfying $\frac{n}{l+1}\leq k\leq n$,\\ $$\binom{n}{\ceil{k}}\leq l\binom{n}{\floor{k}}.$$
    
\end{lemma}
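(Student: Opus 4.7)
The plan is to reduce to the ratio of consecutive binomial coefficients. If $\lceil k\rceil=\lfloor k\rfloor$ (in particular whenever $k$ is an integer), both sides of the claimed inequality coincide up to the factor $l\ge 1$ and there is nothing to prove, so I may assume $\lceil k\rceil=\lfloor k\rfloor+1$. In that case the inequality
$$\binom{n}{\lfloor k\rfloor+1}\le l\binom{n}{\lfloor k\rfloor}$$
is equivalent, after dividing both sides by $\binom{n}{\lfloor k\rfloor}$, to
$$\frac{n-\lfloor k\rfloor}{\lfloor k\rfloor+1}\le l,$$
so the entire proof reduces to verifying this single inequality from the hypothesis $k\ge n/(l+1)$.

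Next I would exploit the hypothesis directly: since $\lfloor k\rfloor+1>k\ge n/(l+1)$, I obtain $(l+1)(\lfloor k\rfloor+1)>n$, i.e.
$$l(\lfloor k\rfloor+1)>n-\lfloor k\rfloor-1.$$
All quantities on both sides are integers, so a strict inequality between integers upgrades to $l(\lfloor k\rfloor+1)\ge n-\lfloor k\rfloor$, which is exactly what was needed. This integrality upgrade is the only non-routine point in the argument, and it is where the assumption that $n$, $l$ and the indices in the binomial coefficient are integers is really used.

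Finally, I would note that the boundary condition $k\le n$ only serves to ensure that the binomial coefficients on both sides are well defined (and nonzero), so no separate case analysis is needed there. The main obstacle, if one can call it that, is simply being careful with the rounding when $k$ is not an integer; once the proof is framed as bounding the ratio of successive binomial coefficients, the hypothesis $k\ge n/(l+1)$ plugs in cleanly.
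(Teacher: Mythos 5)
The paper states Lemma~\ref{lem:l} without providing a proof, merely remarking that it is ``a very simple statement,'' so there is no argument of the paper's to compare against. Your proof is correct and is the natural one: reducing to the ratio $\binom{n}{\lfloor k\rfloor+1}\big/\binom{n}{\lfloor k\rfloor}=\frac{n-\lfloor k\rfloor}{\lfloor k\rfloor+1}$, deriving $(l+1)(\lfloor k\rfloor+1)>n$ from the hypothesis, and then using integrality to upgrade $l(\lfloor k\rfloor+1)>n-\lfloor k\rfloor-1$ to $l(\lfloor k\rfloor+1)\ge n-\lfloor k\rfloor$. One small remark: as you yourself observe, the lemma as literally stated takes $k$ to be a positive integer, in which case $\lceil k\rceil=\lfloor k\rfloor$ and the claim is vacuous; the lemma is evidently intended for real $k$ (which is how the paper later invokes it), and your two-case structure handles that intended reading correctly.
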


 The proofs of the next two lemmata are also discussed in \cite{Dumitrescu} Section 3.2. The first follows from a simple induction argument.
 
\begin{lemma}
For all integers $n\geq 36$ and $0\leq k\leq n$, $$\binom{n}{k}\leq \frac{2^n}{7}$$
Moreover, $$\binom{n}{k}\leq \frac{2^n}{2}$$ for all positive integers $n$.
\end{lemma}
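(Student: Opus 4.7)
The plan is a simple induction on $n$ via Pascal's identity. Assume as the induction hypothesis that $\binom{n}{k}\le 2^n/7$ holds for every $0\le k\le n$. Then for any $0\le k\le n+1$, Pascal's rule gives
\[
\binom{n+1}{k}=\binom{n}{k}+\binom{n}{k-1}\le \frac{2^n}{7}+\frac{2^n}{7}=\frac{2^{n+1}}{7},
\]
using the convention $\binom{n}{-1}=\binom{n}{n+1}=0$. So the induction step is essentially free, and the whole argument reduces to verifying a single base case at $n=36$.

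For the base case I would invoke the standard fact that $k\mapsto\binom{n}{k}$ is unimodal with maximum at $k=\lfloor n/2\rfloor$, so it suffices to check $\binom{36}{18}\le 2^{36}/7$. This is a direct numerical comparison: $\binom{36}{18}=9{,}075{,}135{,}300$ whereas $2^{36}/7\approx 9.82\cdot 10^9$, so the bound holds with a little room to spare. (The lemma could in fact be stated from a slightly smaller threshold, but $n=36$ is a convenient round-ish value that will be safely in the regime needed by the later estimates.)

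The "moreover" part is proved by exactly the same induction, now starting from the trivial base case $n=1$, where $\binom{1}{0}=\binom{1}{1}=1=2^1/2$; the Pascal step again yields $\binom{n+1}{k}\le 2\cdot 2^n/2=2^{n+1}/2$. Since the induction step is a one-line application of Pascal's identity in both parts, the only real obstacle is the numerical verification at $n=36$, which is a routine calculation rather than a conceptual difficulty.
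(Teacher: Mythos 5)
Your proof is correct and matches the paper's approach exactly: both use Pascal's identity to carry out an induction on $n$, with base cases at $n=36$ and $n=1$ respectively. The only difference is that you spell out the routine numerical verification of the base case (via unimodality of $k\mapsto\binom{36}{k}$ and computing $\binom{36}{18}$), which the paper leaves as "easy to check."
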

\begin{proof}
    It is easy to verify the statement for $n=36$. For larger $n$, observe that $$\binom{n}{k}=\binom{n-1}{k-1}+\binom{n-1}{k}\leq2\times\frac{2^{n-1}}{7}=\frac{2^n}{7},$$ which completes the induction.
    The moreover part holds for $n=1$, and the same inductive argument applies.
\end{proof}

To prove the next lemma, we need to define \emph{simplified peeling sequences}.

\begin{definition*}
    Let $Z=X_1\cup X_2\cup \dots \cup X_k$ where the sets $X_i$ are pairwise disjoint. Let $\pi$ be a peeling sequence of $Z$. Replace each point in $\pi$ with the symbol $i$, where $i$ is the index of the set $X_i$ containing the point. The resulting sequence $\pi^*$ of indices is called a \emph{simplified peeling sequence} of $Z$ with respect to this partition.
\end{definition*}

\begin{lemma}\label{decomp}\textbf{[Dumitrescu, Tóth]}
    Let $Z = X_1\cup X_2\cup\dots\cup X_k$ be a set of points in general position, such that for all $i\neq j$ $X_i\cap X_j =\emptyset$, $|X_i| =$ $n_i$, and $|Z| =\sum_{i=1}^kn_i= n$. Then $$g(Z)\leq \binom{n}{n_1,n_2,n_3,\dots,n_k}\prod_{i=1}^kg(X_i).$$
\end{lemma}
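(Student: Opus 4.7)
The plan is to construct an injection from peeling sequences of $Z$ into the Cartesian product of simplified peeling sequences with tuples of peeling sequences of the individual $X_i$'s, and then bound each factor.

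First I would show the structural fact driving the argument: if $\pi$ is a peeling sequence of $Z$ and we restrict $\pi$ to the points of $X_i$ in their order of appearance, then the resulting list $\pi|_{X_i}$ is itself a peeling sequence of $X_i$. This uses a single elementary observation about convex hulls, namely that if a point $p$ lies on the convex hull of a set $Y$ and $p \in Y' \subseteq Y$, then $p$ also lies on the convex hull of $Y'$. Applied at the moment $p \in X_i$ is removed in $\pi$, with $Y$ the current remainder of $Z$ and $Y'$ the current remainder of $X_i$, this says $p$ is on the convex hull of the remaining points of $X_i$, which is exactly what it means for $\pi|_{X_i}$ to be a valid peeling sequence of $X_i$.

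Next I would define the map $\Phi \colon \pi \mapsto (\pi^*, \pi|_{X_1}, \pi|_{X_2}, \dots, \pi|_{X_k})$, where $\pi^*$ is the simplified peeling sequence on the partition $X_1, \dots, X_k$. This map is injective: given the image, one can reconstruct $\pi$ uniquely by scanning $\pi^*$ from left to right and, whenever the symbol $i$ is read, inserting the next point from $\pi|_{X_i}$. Consequently,
\[
g(Z) \leq N \cdot \prod_{i=1}^k g(X_i),
\]
where $N$ is the number of simplified peeling sequences of $Z$ that can arise. Since every such $\pi^*$ is a word of length $n$ in the alphabet $\{1,\dots,k\}$ with exactly $n_i$ occurrences of $i$, we have $N \leq \binom{n}{n_1,n_2,\dots,n_k}$, completing the bound.

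The only delicate step is the convex-hull observation underlying the restriction lemma; everything else is bookkeeping. I expect the obstacle to be entirely conceptual rather than technical: one must recognize that $\pi|_{X_i}$ being a peeling sequence of $X_i$ is exactly what makes the decomposition clean. Once that is in place, the multinomial count and the injection argument deliver the inequality immediately, with no need to track how many simplified sequences are actually realizable (which would only tighten, not weaken, the bound).
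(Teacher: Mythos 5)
Your proposal is correct and takes essentially the same approach as the paper: both argue that restricting a peeling sequence of $Z$ to each $X_i$ yields a peeling sequence of $X_i$ (via the convex-hull containment observation), bound the number of simplified sequences by the multinomial coefficient, and multiply. Your framing as an explicit injection $\Phi$ with a reconstruction argument is a slightly more formal packaging of the same counting, but not a different route.
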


\begin{proof}
    We first estimate how many peeling sequences correspond to the same simplified peeling sequence $\pi^*$.

    Fix an index $i$. Restricting $\pi$ to the positions where $\pi^*$ equals $i$ yields a subsequence $\pi_i$. This subsequence must be a peeling sequence of $X_i$, as otherwise those points could not have been removed in this order even in the absence of other points. Thus, for each $i$, there are at most $g(X_i)$ possible choices for $\pi_i$. Therefore, the number of peeling sequences corresponding to a fixed $\pi^*$ is at most $$\prod_{i=1}^k g(X_i).$$

    To conclude the proof, it is enough to count the number of distinct simplified peeling sequences.
    Each such sequence consists of exactly $n_i$ occurrences of the symbol $i$, for each $i=1,\dots, k$. Hence, the total number of such sequences is at most $$\binom{n}{n_1,n_2,n_3,\dots,n_k}=\frac{n!}{n_1!n_2!n_3!\dots n_k!}.$$ Multiplying the two upper bounds concludes the proof.
    \end{proof}
    
    \section{Proof of the main Theorem}

We prove the following theorem, from which Theorem \ref{maintheorem} follows.
\\
\begin{theorem}
For all $n \geq 6$,
\[
g(S_n) \leq 9.78^n/500
\quad \text{and} \quad
g(B_n) \leq 8.67^n/500.
\]
\end{theorem}
\begin{proof}

We proceed by induction on $n$.

For the base cases, we verify the statement for $6\leq n\leq 53$, since all larger instances are constructed from these.

If we construct $S_n$ and $B_n$ as discussed before, they satisfy the required bounds for these values of $n$. Note that in the first $\frac{n}{3}-1$ steps of peeling $S_n$, the convex hull consists of $3$ vertices. This yields the bound $$g(S_n)\leq 3^{\lfloor\frac{n}{3}\rfloor}\left\lceil\frac{2n}{3}\right\rceil!$$ which is already sufficient in the given range of $n$.

For $B_n$, only the first $\lfloor\frac{n}{9}\rfloor$ steps guarantee exactly $3$ vertices on the convex hull. This gives $$g(B_n)\leq 3^{\lfloor\frac{n}{9}\rfloor}\left\lceil\frac{2n}{3}\right\rceil!,$$ which is also sufficient in the required range of $n$.

For $n\geq 54$, we conclude by induction. Suppose the statement holds for all $$\lfloor\frac{n}{9}\rfloor\leq k\leq n-1.$$ That is, $S_k$ has at most $a^k/500$ peeling sequences, and $B_k$ has at most $b^k/500$, where $a=9.78$, and $b=8.67$.

We analyse the behaviour of each $B_n$ and $S_n$, based on the first step at which an entire ray disappears. Depending on how many vertices have been removed by that point, we can deduce some information about the remaining two other rays by the pigeonhole principle. By induction, we can bound the number of peeling sequences on these rays, and combine them with Lemma \ref{decomp}.

Using the sets $B_n$ has the purpose of making these pigeonhole principle arguments more refined. In particular, it enables us to apply stronger bounds to any ray that has lost at least a $2/9$ fraction of its vertices. Ultimately, this leads to a stronger bound than only analysing the $S_{n}$-s.

\subsection{$B_n$}

We begin with the analysis of $B_n$. Throughout this subsection, we assume inductively that the theorem holds for all sets up to $S_{n-1}$ and $B_{n-1}$. We split the peeling sequences into classes according to the index of the last element whose removal made one of the $3$ main rays to disappear. We call this index $s$. We refer to Figure \ref{fig:b} for an example value of $s$. 

In all the following cases, unless stated otherwise, we assume that the first ray to disappear during peeling is the one that initially contains only a single subray. Let $\sigma_i$ denote the total number of peeling sequences in case $i$, for $1\leq i\leq6$. We show that for each $i$, $$\sigma_i\leq C_ib^n$$ for suitable constants $C_i$, where $\sum_{i=1}^6C_i<1/500$.
Then we can conclude that $$\sigma=\sum_{i=1}^6 \sigma_i\leq\sum_{i=1}^6 C_ib^n\leq b^n/500,$$ proving the first half of the theorem.\\

\begin{figure}
	\begin{center}
		\includegraphics[scale=0.25]{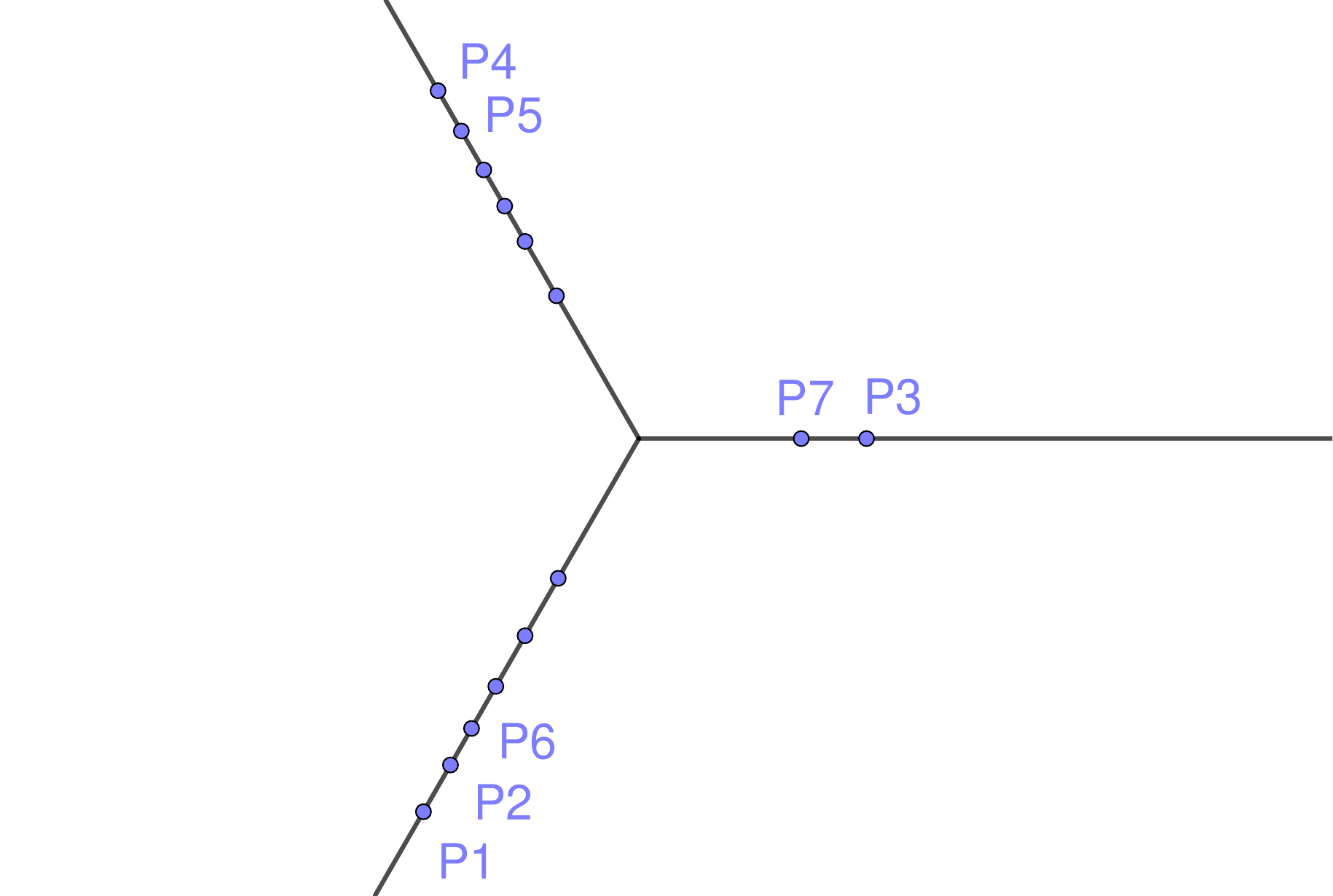}
		\caption{An unflattened representation of $B_{18}$, with a labeling of the starting elements of a peeling sequence. For simplicity, we place vertices on the same ray as collinear, but they are just close to it. Here, the first ray disappears after the $7$th removed element, hence $s=7$ would hold for this sequence.}
		\label{fig:b}
	\end{center}
\end{figure}

    \textbf{Case 1} $\floor{n/9}\leq s\leq \ceil{7n/27}.$

    Let $T_1$ be the set of all simplified peeling sequences on the tripartition defined by the $3$ rays, for which the smallest ray disappears at an index $s$ in the above interval.

    Now we estimate $|T_1|$. The small ray must disappear within the first at most $\ceil{7n/27}$ steps, so their indices can be in at most $\binom{\ceil{7n/27}}{\ceil{n/9}}$ allocations. For each such allocation, the remaining two rays can be placed in at most $\binom{\ceil{2n/3}}{\ceil{n/3}}$ ways.
   
    Then we can bound the size of $T_1$ by the use of our estimation lemmas in section $4$, and by using Lemma \ref{lem:l} with $l=2$.
\begin{equation*}
\begin{split}   
    |T_1|\leq \binom{\ceil{7n/27}}{\ceil{n/9}}\cdot \binom{\ceil{2n/3}}{\floor{n/3}}&\leq 2\binom{\ceil{7n/27}}{\floor{\frac37\ceil{7n/27}}}\cdot \binom{\ceil{2n/3}}{\floor{n/3}}\\&\leq 8\cdot 2^{7H(3/7)n/27+2n/3}/7\leq2^{0.9221n}\cdot8/7
\end{split} 
\end{equation*}

   For each $\pi^* \in T_1$, the order of removals up to step $s$ is fixed.
    Hence, the smallest ray's element order is determined, and the possible orderings of the other elements are bounded above by $g(S_{\ceil{\frac{n}{3}}})g(S_{\ceil{\frac{n}{3}}})$.
    Hence, by using induction, the total number of such peeling sequences is at most:

    $$\sigma_1\leq |T_1|\cdot g(S_{\ceil{n/3}})^2\leq(2^{0.9221n}\cdot8/7)\cdot (a^{2n/3}/250000)\cdot a^{4/3}\leq C_1 \cdot b^n$$

    This inequality holds provided that

$$b^n\geq a^{2n/3}\cdot 2^{0.9221n}$$  $$8\cdot a^{2/3}/1750000=C_1.$$
\\

    \textbf{Case 2} $\ceil{7n/27}\leq s\leq \ceil{8n/27}$
    
    In this range, by the pigeonhole principle, one of the other two rays must have turned into a subset of $B_{\ceil{n/3}}$ by step $s$.

    We estimate $T_2$:

    $$|T_2|\leq \binom{\ceil{8n/27}}{\ceil{n/9}}\cdot \binom{\ceil{2n/3}}{\floor{n/3}}\leq 8\cdot 2^{8H(3/8)n/27+2n/3}/7\leq2^{0.9495n}\cdot8/7$$

By induction and a similar argument to the previous case, the total number of such peeling sequences is at most:
  \begin{equation*}
    \begin{split}
     \sigma_2&\leq |T_2|\cdot g(S_{\ceil{n/3}})\cdot g(B_{\ceil{n/3}})\\ &\leq(2^{0.9495n}\cdot8/7)\cdot (a^{n/3}/500)\cdot (b^{n/3}/500)\cdot a^{2/3}\cdot b^{2/3}\\ &\leq C_2 \cdot b^n
    \end{split}
\end{equation*}

     This inequality holds if

$$b^n\geq a^{n/2}\cdot 2^{1.4243n}$$ $$8\cdot a^{2/3}\cdot b^{2/3}/1750000=C_2.$$
\\
    
 \textbf{Case 3} $\ceil{8n/27}\leq s\leq \ceil{12n/27}$
 
     By the pigeonhole principle, now we know that in the first $s$ steps, either both of the remaining rays have turned into a subset of a $B_{\ceil{n/3}}$, or one ray has lost an entire subray.

Using Lemma~\ref{lem:l} with $l = 3$, we obtain

\begin{equation*}
\begin{split}
    |T_3|\leq \binom{\ceil{12n/27}}{\ceil{n/9}}\cdot \binom{\ceil{2n/3}}{\floor{n/3}}&\leq 3\cdot\binom{\ceil{12n/27}}{\floor{\frac{3}{12}\ceil{12n/27}}}\cdot \binom{\ceil{2n/3}}{\floor{n/3}}\\&\leq12\cdot 2^{12\cdot H(3/12)n/27+2n/3}/7\\&\leq2^{1.0273n}\cdot12/7
\end{split}
\end{equation*}

If a ray loses a subray, it is a subset of two copies of $S_{\ceil{n/9}}$. An upper bound on the number of ways the remainder of this ray can be peeled follows by Lemma \ref{decomp}, showing it is at most $\binom{2\ceil{n/9}}{\ceil{n/9}}\cdot g(S_{\ceil{n/9}})^2\leq 2\cdot2^{2n/9}\cdot g(S_{\ceil{n/9}})^2$. Hence,

\begin{equation*}
\begin{split}
       \sigma_3&\leq |T_3|\cdot max[( g(B_{\ceil{n/3}})\cdot g(B_{\ceil{n/3}}),g(S_{\ceil{n/3}})\cdot2\cdot 2^{2n/9}\cdot g(S_{\ceil{n/9}})^2] \\ &\leq(2^{1.0273n}\cdot12/7)\cdot max[(b^{n/3}/500)^2\cdot b^{4/3},\\ &(a^{n/3}/500)\cdot a^{2/3}\cdot2^{2n/9+1}\cdot (a^{n/9}/500)^2\cdot a^{16/9}]\\ &\leq C_3\cdot b^n
\end{split}
\end{equation*}
     This inequality holds if

$$b^n\geq max(2^{3.0819n},a^{5n/9}\cdot 2^{1.2496n})$$ $$max(12\cdot b^{4/3}/1750000,24\cdot a^{22/9}/875000000) =C_3.$$
\\

    \textbf{Case 4}     $\ceil{12n/27}\leq s\leq \ceil{15n/27}$
    
        By the pigeonhole principle, at least two additional subrays have disappeared until step $s$. This can happen in two ways: Either one ray has lost $2$ subrays so it became a subset of an $S_{\ceil{n/9}}$, or both rays lost $1$-$1$ subrays, each becoming a subset of a pair of $S_{\ceil{n/9}}$'s.

        Using $l=4$ in the application of Lemma \ref{lem:l} we obtain

    $$|T_4|\leq \binom{\ceil{15n/27}}{\ceil{n/9}}\cdot \binom{\ceil{2n/3}}{\floor{n/3}}\leq 16\cdot 2^{15\cdot H(3/15)n/27+2n/3}/7\leq2^{1.0678n}\cdot16/7.$$

If a ray loses a subray, it is a subset of two copies of $S_{\ceil{n/9}}$. An upper bound on the number of ways the remainder of this ray can be peeled follows by Lemma \ref{decomp}, showing it is at most $\binom{2\ceil{n/9}}{\ceil{n/9}}\cdot g(S_{\ceil{n/9}})^2\leq 2\cdot2^{2n/9}\cdot g(S_{\ceil{n/9}})^2$. Hence,
\begin{equation*}
\begin{split}
       \sigma_4 & \leq |T_4|\cdot max[(g(S_{\ceil{n/9}})\cdot g(S_{\ceil{n/3}}),4\cdot2^{4n/9}g(S_{\ceil{n/9}})^4)] \\ &\leq (2^{1.0678n}\cdot16/7)\cdot max[(a^{n/9}/500)\cdot (a^{n/3}/500)\cdot a^{14/9},\\ &2^{4n/9}\cdot (a^{n/9}/500)^4\cdot4\cdot a^{32/9}] \\ &\leq C_4\cdot b^n
\end{split}
\end{equation*}

     Since the second part of the maximum is strictly larger for $n\geq54$, this inequality holds when:

$$b^n\geq a^{4n/9}\cdot 2^{1.5123n}$$ $$64a^{32/9}/437500000000=C_{4}.$$
\\

    \textbf{Case 5}     $\ceil{15n/27}\leq s\leq \ceil{21n/27}$
    
  By the pigeonhole principle, at least two extra subrays disappeared from the construction until step $s$. This can happen in only one way: one ray is a subset of $S_{\ceil{n/9}}$, and the other one is a subset of a pair of $S_{\ceil{n/9}}$.

  We use $l=6$ when applying Lemma \ref{lem:l}.
        
    $$|T_5|\leq \binom{\ceil{21n/27}}{\ceil{n/9}}\cdot \binom{\ceil{2n/3}}{\floor{n/3}}\leq 24\cdot 2^{21\cdot H(3/21)n/27+2n/3}/7\leq 2^{1.127n}\cdot24/7$$

The total number of such sequences is at most:
\begin{equation*}
    \begin{split}
    \sigma_5&\leq |T_5|\cdot g(S_{\ceil{n/9}})\cdot2\cdot2^{2n/9}\cdot g(S_{\ceil{n/9}})^2\\ &\leq(2^{1.127n}\cdot24/7)\cdot (a^{n/9}/500)\cdot2^{2n/9}\cdot  (a^{n/9}/500)^2\cdot a^{24/9}\cdot2\leq C_5\cdot b^n  
    \end{split}
\end{equation*}

     This inequality holds if

$$b^n\geq a^{n/3}\cdot 2^{1.3493n}$$ $$48\cdot a^{24/9}/875000000=C_{5}.$$
\\

    \textbf{Case 6} $\floor{9n/27}\leq s\leq \ceil{21n/27}$

    In this case, the first ray to disappear is a longer $S_{\ceil{n/3}}$ ray, not the short ray.

    We count with all the possible $s$ values at once and give an upper bound on the number of simplified peeling sequences again, similar to the other cases. Here we can take $l=2$ again.

    $$|T_6|\leq \binom{\ceil{21n/27}}{\ceil{n/3}}\cdot \binom{\ceil{12n/27}}{\ceil{n/9}}\leq 24\cdot 2^{21\cdot H(9/21)n/27+4n/9}/7\leq2^{1.2108n}\cdot24/7$$

    The total number of such sequences is at most:

    $$\sigma_6\leq |T_6|\cdot(g(S_{\ceil{n/3}})\cdot g(S_{\ceil{n/9}}))\leq (2^{1.2108n}\cdot24/7)\cdot (a^{n/3}/500)\cdot (a^{n/9}/500)\cdot a^{14/9}\leq C_6\cdot b^n$$

    This inequality holds if 

    $$b^n\geq a^{4n/9}\cdot 2^{1.2108n}$$ $$24\cdot a^{14/9}/1750000=C_{6}.$$
\\

    After all that case analysis, we now collect the conditions we have on the value of $b$, and on the values of the coefficients, in order to make the induction work. If they are all fulfilled by $a=9.78$, $b=8.67$, the proof is done on $B_n$.

So $b$ must be at least as large as any of the following:

$b^n\geq 8.67^n\geq a^{2n/3}\cdot 2^{0.9221n}$ 

$b^n\geq 8.40^n\geq a^{n/2}\cdot 2^{1.4243n}$ 

$b^n\geq 8.47^n\geq max(2^{3.0819n},a^{5n/9}\cdot 2^{1.2496n})$ 

$b^n\geq 7.86^n\geq a^{4n/9}\cdot 2^{1.5123n}$

$b^n\geq 5.45^n\geq a^{n/3}\cdot 2^{1.3493n}$

$b^n\geq 6.38^n\geq a^{4n/9}\cdot 2^{1.2108n}$

We can clearly see that the conditions hold for $b=8.67$.

We also calculate the coefficients:

$C_1=8\cdot a^{2/3}/1750000\leq 0.00003$

$C_2=8\cdot a^{2/3}\cdot b^{2/3}/1750000\leq 0.00009$

$C_3=max(12\cdot b^{4/3}/1750000,24\cdot a^{22/9}/875000000)\leq 0.00013$

$C_4=64\cdot a^{32/9}/437500000000\leq 0.00001$

$C_5=48\cdot a^{24/9}/875000000\leq 0.00003$

$C_6=24\cdot a^{14/9}/1750000\leq 0.00048$

Since $\sum_{i=1}^6C_i\leq 0.00077\leq 1/500$, the induction for $g(B_n)$ follows. 

It remains to analyze the number of peeling sequences of $S_n$.

\subsection{${S_n}$}

Now we perform the same type of analysis for $a$ and the sets $S_n$. As before, we always assume by induction that the theorem holds up until $B_{n-1}$ and $S_{n-1}$.

As previously, let $s$ denote the index of the last element of the first disappearing ray in the modified peeling sequence.\\

\textbf{Case 1}
$\floor{9n/27}\leq s\leq \ceil{13n/27}$

Using $l=2$ in Lemma \ref{lem:l} we obtain
$$|T_1|\leq 3\cdot \binom{\ceil{13n/27}}{\floor{n/3}}\cdot \binom{\ceil{2n/3}}{\floor{n/3}}\leq 24\cdot 2^{13\cdot H(4/13)n/27+2n/3}/7\leq24\cdot 2^{1.0955n}/7$$

The number of peeling sequences on each remaining rays can be upper bounded by $g(S_{\ceil{n/3}})$. Thus, the total number of sequences is at most:

    $$\sigma_1\leq |T_1|\cdot(g(S_{\ceil{n/3}})^2\leq (24\cdot 2^{1.0955n}/7)\cdot (a^{n/3}/500)^2\cdot a^{4/3}\leq C_1\cdot a^n$$

    The last inequality holds if

    $$a^{n}\geq 2^{3.2865n}$$ $$24\cdot a^{4/3}/1750000=C_1.$$ \\
    
\textbf{Case 2}
$\ceil{13n/27}\leq s\leq \ceil{14n/27}$

By the pigeonhole principle, at least one of the remaining rays got reduced to a subset of a $B_{\ceil{n/3}}$.

$$|T_2|\leq 3\cdot\binom{\ceil{14n/27}}{\floor{n/3}}\cdot \binom{\ceil{2n/3}}{\floor{n/3}}\leq 24\cdot 2^{14\cdot H(5/14)n/27+2n/3}/7\leq24\cdot 2^{1.1543n}/7$$

Thus,
\begin{equation*}
    \begin{split}
    \sigma_2&\leq |T_2|\cdot g(S_{\ceil{n/3}})\cdot g(B_{\ceil{n/3}})\\ &\leq (24\cdot 2^{1.1543n}/7)\cdot (a^{n/3}/500)\cdot (b^{n/3}/500)\cdot b^{2/3}\cdot a^{2/3}\\ &\leq C_2\cdot a^n
   \end{split}
\end{equation*}
    The last inequality holds if

$$a^n\geq 2^{1.7315n}\cdot b^{n/2}$$  $$24\cdot a^{2/3}\cdot b^{2/3}/1750000=C_2.$$\\
    
\textbf{Case 3}
$\ceil{14n/27}\leq s\leq \ceil{15n/27}$

Either both remaining rays got reduced to a subset of $B_{\ceil{n/3}}$, or one of them lost an entire subray and became a subset of a union of two $S_{\ceil{n/9}}$'s.

$$|T_3|\leq 3\cdot\binom{\ceil{15n/27}}{\floor{n/3}}\cdot \binom{\ceil{2n/3}}{\floor{n/3}}\leq 24\cdot 2^{15\cdot H(6/15)n/27+2n/3}/7\leq24\cdot 2^{1.2061n}/7$$

By Lemma \ref{decomp}, the number of ways a pair of $S_{\ceil{n/9}}$ can be peeled is at most $\binom{2\ceil{n/9}}{\ceil{n/9}}\cdot g(S_{\ceil{n/9}})^2\leq 2\cdot2^{2n/9}\cdot g(S_{\ceil{n/9}})^2$.

Thus,
\begin{equation*}
    \begin{split}
         \sigma_3&\leq |T_3|\cdot max[g(S_{\ceil{n/3}})\cdot2\cdot2^{2n/9}\cdot g(S_{\ceil{n/9}})^2,g(B_{\ceil{n/3}})^2]\\ &\leq (24\cdot 2^{1.2061n}/7)\cdot max[(a^{n/3}/500)\cdot2^{2n/9}(a^{n/9}/500)^2\cdot 2\cdot a^{22/9},\\ &(b^{n/3}/500)^2\cdot b^{4/3}]\\ &\leq C_3\cdot a^n
    \end{split}
\end{equation*}

    The last inequality holds if

    $$a^n\geq max(2^{3.2138n},2^{1.2061n}\cdot b^{2n/3})$$ $$max(48\cdot a^{22/9}/875000000,24\cdot b^{4/3}/1750000)=C_3.$$\\
    
\textbf{Case 4}
$\ceil{15n/27}\leq s\leq \ceil{17n/27}$

At least one of the remaining rays lost a subray, say ray $A$. In addition to that, either ray $A$ misses another sub-subray (a ray of a subray), or the other ray is a subset of $B_{\ceil{n/3}}$.

$$|T_4|\leq 3\cdot \binom{\ceil{17n/27}}{\floor{n/3}}\cdot \binom{\ceil{2n/3}}{\floor{n/3}}\leq 24\cdot 2^{17\cdot H(8/17)n/27+2n/3}/7\leq24\cdot 2^{1.2948n}/7$$

When the two rays are a subset of a $B_{\ceil{n/3}}$ and a subset of a pair of $S_{\ceil{n/9}}$ respectively, we already know what estimates we can use.

When one ray is a subset of the union of a copy of $S_{\ceil{n/9}}$ and two copies of $S_{\ceil{n/27}}$, by Lemma \ref{decomp} and the estimation lemmas,  we get an upper bound of $\binom{\ceil{3n/27}+2\ceil{n/27}}{\floor{{3n/27}}}\binom{2\ceil{n/27}}{\ceil{n/27}}\cdot g(S_{\ceil{n/9}})\cdot g(S_{\ceil{n/27}}^2)\leq 8\cdot 2^{7n/27}\cdot g(S_{\ceil{n/9}})\cdot g(S_{\ceil{n/27}})^2$ for the number of peeling sequences of the ray.

Thus,
\begin{equation*}
    \begin{split}
        \sigma_4&\leq |T_4|\cdot max[g(S_{\ceil{n/3}})\cdot2^{7n/27+3}\cdot g(S_{\ceil{n/9}})\cdot g(S_{\ceil{n/27}})^2,\\ &g(B_{\ceil{n/3}})\cdot2\cdot2^{2n/9}\cdot g(S_{\ceil{n/9}})^2] \\ &\leq max[(192\cdot 2^{1.2948n}/7)\cdot (a^{n/3}/500)\cdot 2^{7n/27}\cdot (a^{n/9}/500)\cdot (a^{n/27}/500)^2\cdot a^{98/27}, \\ & (b^{n/3}/500)\cdot 2^{2n/9}\cdot (a^{n/9}/500)^2\cdot2\cdot a^{16/9}\cdot  b^{2/3}]\\ &\leq C_4\cdot a^n
    \end{split}
\end{equation*}

    The last inequality holds if

    $$a^n\geq max(2^{3.2277n},2^{1.9506n}\cdot b^{3n/7})$$ $$max(192\cdot a^{98/27}/437500000000, 48\cdot a^{16/9}\cdot b^{2/3}/875000000)=C_4.$$\\
    
\textbf{Case 5}
$\ceil{17n/27}\leq s\leq \ceil{18n/27}$

Either two different subbrays have disappeared, or the two rays got reduced to a subset of $B_{\ceil{n/3}}$, and to a subset of $S_{\ceil{n/9}}\cup S_{\ceil{n/27}}$ respectively.
The two entire subrays could miss from two separate rays or from the same ray. This gives $3$ cases.

$$|T_5|\leq 3\binom{\ceil{18n/27}}{\floor{n/3}}\cdot \binom{\ceil{2n/3}}{\floor{n/3}}\leq 12\cdot 2^{2n/3+2n/3}/49\leq12\cdot 2^{4n/3}/49$$

When one ray is a subset of a copy of $S_{\ceil{n/9}}$ and of $S_{\ceil{n/27}}$, by Lemma \ref{decomp} and the estimation lemmas,  we get an upper bound of $\binom{\ceil{3n/27}+\ceil{n/27}}{\floor{{3n/27}}}\cdot g(S_{\ceil{n/9}})\cdot g(S_{\ceil{n/27}})\leq 2\cdot 2^{4n/27}\cdot g(S_{\ceil{n/9}})\cdot g(S_{\ceil{n/27}})$ for the number of peeling sequences of the ray.

Thus,
\begin{equation*}
    \begin{split}
        \sigma_5&\leq |T_5|\cdot max[g(B_{\ceil{n/3}})\cdot2\cdot 2^{4n/27}\cdot g(S_{\ceil{n/9}})\cdot g(S_{\ceil{n/27}}),\\ & g(S_{\ceil{n/3}})\cdot g(S_{\ceil{n/9}}),4\cdot2^{4n/9}\cdot g(S_{\ceil{n/9}})^4] \\ &\leq (12\cdot 2^{4n/3}/49)\cdot max[(b^{n/3}/500)\cdot 2^{4n/27}(a^{n/9}/500)\cdot(a^{n/27}/500)\cdot 2\cdot a^{50/27}\cdot b^{2/3},\\ &(a^{n/3}/500)\cdot(a^{n/9}/500)\cdot a^{14/9},2^{4n/9}\cdot (a^{n/9}/500)^4\cdot 4a^{32/9}]\\ &\leq C_5\cdot a^n
    \end{split}
\end{equation*}

    The last inequality holds if

    $$a^n\geq max(2^{40n/23}\cdot b^{9n/23}, 2^{12n/5}, 2^{16n/5})$$ 
    $$max[24\cdot a^{50/27}\cdot b^{2/3}/612500, 12\cdot a^{14/9}/1225, 48\cdot a^{32/9}/306250000]\cdot 10^{-4}=C_5.$$ \\
    
\textbf{Case 6}
$\ceil{18n/27}\leq s\leq \ceil{22n/27}$

Two subrays have disappeared.
The two subrays could miss from two separate rays or from the same ray, giving two cases.
In the case when the subrays are missing from separate rays, we also know that one of the rays also misses another sub-subray (an $n/27$ piece) by the pigeonhole principle. For all these kinds of ray structure, we have our estimates from previous cases.

$$|T_6|\leq 3\binom{\ceil{22n/27}}{\ceil{n/3}}\cdot \binom{\ceil{2n/3}}{\floor{n/3}}\leq 24\cdot 2^{22H(9/22)n/27+2n/3}/7\leq24\cdot2^{1.4620n}/7$$

Thus,

\begin{equation*}
    \begin{split}
            \sigma_6&\leq |T_6|\cdot max[g(S_{\ceil{n/3}})\cdot g(S_{\ceil{n/9}}),\\ & 2\cdot 2^{2n/9}\cdot g(S_{\ceil{n/9}})^2\cdot8\cdot 2^{7n/27}\cdot g(S_{\ceil{n/9}})\cdot g(S_{\ceil{n/27}})^2] \\ &\leq (24\cdot2^{1.4620n}/7)\cdot max[(a^{n/3}/500)\cdot (a^{n/9}/500)\cdot a^{14/9},\\ &2^{13n/27}\cdot(a^{n/9}/500)^3\cdot (a^{n/27}/500)^2\cdot 16\cdot a^{98/27}]\\ &\leq C_6\cdot a^n
    \end{split}
\end{equation*}

    The last inequality holds if

    $$a^n\geq max(2^{2.26317n},2^{3.28n})$$ $$max(24\cdot a^{14/9}/1750000, 384\cdot a^{98/27}/437500000000)=C_6.$$\\
    
\textbf{Case 7}
$\ceil{22n/27}\leq s\leq \ceil{27n/27}$

At least three different subrays have disappeared.
Two subrays miss from one ray, and one subray misses from the other one. 

$$|T_7|\leq 3\binom{\ceil{27n/27}}{\ceil{n/3}}\cdot \binom{\ceil{2n/3}}{\floor{n/3}}\leq 24\cdot 2^{27H(9/27)n/3+2n/3}/7\leq24\cdot2^{1.5850n}/7$$

Thus,
\begin{equation*}
\begin{split}
    \sigma_7&\leq |T_7|\cdot (g(S_{\ceil{n/9}})\cdot2\cdot2^{2n/9}\cdot g(S_{\ceil{n/9}})^2)\\ &\leq (24\cdot2^{1.5850n}/7)\cdot (a^{n/9}/500)^3\cdot2^{2n/9}\cdot2\cdot a^{24/9}\\ &\leq C_7\cdot a^n
    \end{split}
\end{equation*}

    The last inequality holds if

        $$a^n\geq 2^{2.7109n}$$ $$48\cdot a^{24/9}/875000000=C_7.$$\\

    Collecting all constraints, we need to verify that $a = 9.78$ satisfies each inequality.

$a^{n}\geq9.76^n\geq 2^{3.2865n}$

$a^n\geq9.78^n\geq 2^{1.7315n}\cdot b^{n/2}$

$a^n\geq9.74^n\geq max(2^{3.2138n},2^{1.2061n}\cdot b^{2n/3})$

$a^n\geq9.76^n\geq max(2^{3.2277n},2^{1.9506n}\cdot b^{3n/7})$

$a^n\geq9.19^n\geq max(2^{40n/23}\cdot b^{9n/23}, 2^{12n/5}, 2^{16n/5})$

$a^n\geq9.72^n\geq max(2^{2.26317n},2^{3.28n})$

    $a^n\geq6.55^n\geq 2^{2.7109n}$

We also need to check the sum of the coefficients:

$C_1=24\cdot a^{4/3}/1750000\leq 0.00029$

$C_2=24\cdot a^{2/3}\cdot b^{2/3}/1750000\leq 0.00027$

$C_3=max(48\cdot a^{22/9}/875000000,24\cdot b^{4/3}/1750000)\leq0.00025$

$C_4=max(192\cdot a^{98/27}/437500000000, 48\cdot a^{16/9}\cdot b^{2/3}/875000000)\leq0.00002$

$C_5=max(24\cdot a^{\frac{50}{27}}\cdot b^{\frac{2}{3}}/6125, 12\cdot a^{\frac{14}{9}}/12.25, 48\cdot a^{\frac{32}{9}}/3062500)\cdot 10^{-6}\leq 4\cdot 10^{-5}$

$C_6=max(24\cdot a^{14/9}/1750000, 384\cdot a^{98/27}/437500000000)\leq 0.00048$

$C_7=48\cdot a^{24/9}/875000000\leq0.00003$

$\sum_{i=1}^7C_i\leq 0.00138<0.002=\frac{1}{500}$ which is sufficient, and concludes the proof of the theorem.
\end{proof}

    \section{Concluding Remarks}

  Despite considerable effort, the author was unable to improve the trivial lower bound on the minimal number of peeling sequences, this therefore remains an open problem. The intuition behind the upcoming conjecture comes from the Fractional Erdős-Szekeres Theorem \cite{barany}, which implies that for any $n$ there are $\alpha n$ many points, such that their convex hull forms a $4$-gon for many steps.

    \begin{conjecture}
        The minimal number of peeling sequences of $n$ points in $\R^d$ is at least $c(d+1+\epsilon)^n$ for some positive constants $c$ and $\epsilon$. So $g_d(n)=\Omega((d+1+\epsilon)^n)$.
    \end{conjecture}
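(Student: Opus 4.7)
The plan is to combine Bárány's Fractional Erdős–Szekeres Theorem \cite{barany} with the recursive identity $g(P)=\sum_{v\in\mathrm{vert}(\mathrm{CH}(P))} g(P\setminus\{v\})$ that underlies peeling. The trivial lower bound $g_d(n)\geq (d+1)^{n-d}$ follows because every set of at least $d+1$ points in general position in $\R^d$ has at least $d+1$ extreme points. To push the base up to $d+1+\epsilon$, the aim is to show that along every descending path in the peeling recursion tree, at least a constant fraction of the intermediate sets have at least $d+2$ extreme points.

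The starting point is a subset $Q\subseteq P$ of size $\alpha_d n$ in convex position, supplied by Fractional Erdős–Szekeres. Because $Q$ is in convex position, the only way a point $q\in Q\cap P'$ fails to be extreme in $P'$ is that some points of $P'\setminus Q$ witness, via Carathéodory's theorem, that $q$ lies in the convex hull of other points of $P'$. This lets one bound the number of ``hidden'' $Q$-points in $P'$ in terms of $|P'\setminus Q|$. The core lemma I would try to establish is: there exist constants $\beta,\gamma>0$ such that every $P'\subseteq P$ satisfying $|Q\cap P'|\geq \beta n$ and $|P'\setminus Q|\leq \gamma n$ has $|\mathrm{vert}(\mathrm{CH}(P'))|\geq d+2$.

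Granted such a lemma, one observes that along every peeling path a linear number of non-$Q$ points must be removed before $|Q\cap P'|$ drops below $\beta n$, so the path passes through a contiguous window of $\Theta(n)$ sets that satisfy the lemma's hypotheses. Using $g(P)\geq\prod_i h_i$, which follows from the recursion by always descending to the child with the smallest value, yields $g_d(n)\geq c_d(d+1)^{(1-\beta)n}(d+2)^{\beta n}=c_d(d+1+\epsilon)^n$ for an explicit $\epsilon>0$ depending on $\alpha_d$ and $\beta$.

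The main obstacle is proving the core lemma. Although Carathéodory gives the right template—each hidden $Q$-vertex demands a $(d+1)$-simplex of blockers, and a single blocker can serve only a bounded number of $Q$-vertices—turning this into a quantitative statement ruling out that few non-$Q$ points persistently hide all but $d+1$ vertices of $Q$ is delicate, because the blockers can rearrange themselves as peeling progresses. Likely the cleanest route uses a fractional Helly or first-selection lemma to control simultaneous hiding over a positive fraction of peeling stages. A secondary obstacle in the higher-dimensional case is that $\alpha_d$ decays with $d$, so obtaining a $d$-independent $\epsilon$, or even any $\epsilon$ in all dimensions, may require iterating the construction on nested convex subsets inside $Q$, or appealing to Pach's same-type lemma to produce a more rigidly structured convex subfamily.
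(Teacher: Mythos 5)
This statement is an open conjecture in the paper; the paper offers no proof, only the heuristic that the $S_n$ construction ``feels'' optimal and a pointer to the Fractional Erdős--Szekeres Theorem as intuition. So there is no paper argument to compare against, but your proposal can be assessed on its own terms, and it has a structural flaw that I think is fatal regardless of whether the core lemma could be repaired.

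The approach lower-bounds $g(P)$ by $\prod_i h_i$ along a worst-case peeling path, where $h_i$ is the number of hull vertices at step $i$, and then tries to show that for $\Theta(n)$ of the steps one has $h_i\geq d+2$. But this is precisely what the extremal construction $S_n$ is designed to defeat: after flattening, the convex hull of $S_n$, and of every set obtained from it during a minimizing peeling, is a triangle (more generally a $d$-simplex) except for a negligible number of final steps. So along the relevant path, $h_i=d+1$ essentially always, and $\prod_i h_i\approx(d+1)^n$. Consequently, for the set $P$ that actually attains (or nearly attains) $g(n)$, your pipeline yields only the trivial bound. If the conjecture is true, the excess factor $(1+\epsilon/(d+1))^n$ cannot come from the hull being large at many stages; it has to come from the fact that the $d+1$ children in the recursion $g(P)=\sum_v g(P\setminus\{v\})$ are \emph{unequal}, so that the sum exceeds $(d+1)\min_v g(P\setminus\{v\})$ by an exponential factor across the whole recursion tree. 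Any argument that collapses the recursion to a single product along a path discards exactly the information one needs.

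Two secondary problems. First, the step ``along every peeling path a linear number of non-$Q$ points must be removed before $|Q\cap P'|$ drops below $\beta n$'' does not follow from anything: the peeling is adversarial and has no obligation to preserve $Q$, so it may interleave $Q$-points and non-$Q$-points in such a way that you never enter the window $|Q\cap P'|\geq\beta n$, $|P'\setminus Q|\leq\gamma n$ in which your core lemma would apply. Second, the core lemma itself is not true as stated: a single $(d+1)$-simplex formed by non-$Q$ blockers can contain an arbitrarily large convex subset in its interior, so ``a single blocker can serve only a bounded number of $Q$-vertices'' fails, and nested simplices of non-$Q$ points can keep the hull at size $d+1$ for $\Omega(n)$ steps even while $|Q\cap P'|$ stays large. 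To make progress on the conjecture you would need a counting argument that tracks how the three (or $d+1$) subproblems after a hull removal differ from one another inside the self-similar structure of $S_n$, rather than trying to force extra hull vertices.
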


    The upper bound can be improved with a more careful case analysis, for example by considering subrays of greater depth. It would also be interesting to find a different way of deriving a strong upper bound on the number of peeling sequences of $S_n$. The construction appears optimal: If the next vertex is always chosen randomly from the convex hull, one would expect to only choose between $3$ different vertices for most of the steps.
%%%%%%%%% Acknowledgements are presented as an unnumbered section %%%%%
\section*{Acknowledgements}
Research supported by ERC advanced grant no. 882971: Geoscape.

%%%%%%%%% pls do not touch this line, it prevents figures etc to enter references %
\FloatBarrier

%%%%%%%%% BiBTex is expected to be used %%%%%%%%%%%%%%%%%%%%%%%%%%%%%
%%%%%%%%% Pls replace "MathPann" with your own .bib file %%%%%%%%%%%%%%%%%%

\end{document}